\newtheorem{theorem}{Theorem}[section]
\newtheorem{lemma}[theorem]{Lemma}
\newtheorem{definition}[theorem]{Definition}
\newtheorem{exmp}[theorem]{Example}
\newtheorem{rem}[theorem]{Remark}
\def\R{\mathbb R}
\def\C{\mathbb C}
\def\Z{\mathbb Z}
\def\N{\mathbb N}
\def\om{\omega}
\def\ga{\gamma}
\def\dis{\displaystyle}
\DeclareMathOperator{\sech}{sech}
\title{Existence and uniqueness of ($\omega$,c)-periodic solutions of semilinear evolution equations}
\author[1,3]{Makrina Agaoglou\thanks{makrina\_agao@hotmail.com, M.A. is supported by the National Scholarship Programme of the Slovak Republic for the Support of Mobility of Students, PhD Students, University Teachers, Researchers and Artists}}
\author[1,2]{Michal Fe{\v c}kan\thanks{corresponding author, michal.feckan@gmail.com, M.F. is supported by the Slovak
Research and Development Agency (grant number APVV-14-0378) and the Slovak Grant Agency VEGA (grant numbers
2/0153/16 and 1/0078/17)}}
\author[2,4]{Angeliki P. Panagiotidou\thanks{angelipp86@gmail.com}}
\affil[1]{Mathematical Institute of Slovak Academy of Sciences, {\v S}tef{\'a}nikova 49, 814 73 Bratislava, Slovakia}
\affil[2]{Department of Mathematical Analysis and Numerical Mathematics, Comenius University in Bratislava, Mlynsk{\'a} dolina, 842 48 Bratislava, Slovakia}
\affil[3]{Department of Mechanical Engineering, Faculty of Engineering, Aristotle University of Thessaloniki, Thessaloniki 54124, Greece}
\affil[4]{School of Science and Technology, Hellenic Open University, 13-15 Tsamadou str, GR-26222 Patras, Greece}
\begin{document}

\maketitle

\begin{abstract}
In this work we study the existence and uniqueness of $(\omega,c)$-periodic solutions for semilinear evolution equations in complex Banach spaces.
\end{abstract}

{\bf Keywords.} ($\omega$,c)-periodic solutions; semilinear evolution equations; nonresonance conditions

{\bf Biographical notes} Makrina Agaoglou is a Postoctoral Researcher at the Aristotle Univerisity of Thessaloniki in Greece and a Researcher in the Slovak Academy of Sciences in Bratislava. She is interested in theoretical and computational dynamical systems and analysis and focuses on applications in mathematical physics and engineering.

Michal Fe\v ckan is a Professor of Mathematics at the Comenius University in Bratislava. He is interested in nonlinear functional analysis, bifurcation theory, dynamical systems and fractional calculus with applications to mechanics, vibrations and economics.

Angeliki Panagiotidou is a Visiting Fellow at the Comenius Univerisity in Bratislava. She is interested in nonlinear partial differential equations and focuses on applications in fluid dynamics, multiphase flows and math biology.

\section{Introduction}

Alvarez et al. \cite{AGP} introduced the concept of $(\omega,c)$-periodic functions by observing that any complex valued solution $x(t)$ of the Mathieu's equation $x''+ax=2q\cos(2t)x$ (see \cite[Chapter 8, Section 4]{CL}) fulfills the equality $x(t+\omega)=cx(t)$ for a complex number $c\in\C$. Note that the Mathieu's equation is the Hill's equation with only one harmonic mode. The Bloch functions, which satisfy the Schr\"odinger equation, have the same property: they are $(\omega,c)$-periodic. Obviously, $(\omega,c)$-periodic functions reduce to the standard $\omega$-periodic functions when $c=1$, and to $\omega$-antiperiodic ones when $c=-1$. These last particular cases are already intensively studied (see \cite{AK,AAD,Far,FNO,H,WXW}).

Motivated by \cite{AGP}, we study the existence and uniqueness of $(\omega,c)$-periodic solutions for semilinear evolution equations in complex Banach spaces. At first, in Section \ref{s1}, we consider that a linear operator of the evolution equation is bounded. Assuming a nonresonance condition, we find a Green function of a nonhomogeneous linear equation with the corresponding boundary value conditions. Then in Section \ref{s2}, we rewrite our problem to a fixed point equation and solving it via the Banach fixed point theorem, we derive an existence and uniqueness result on $(\omega,c)$-periodic solutions. The Schauder fixed point theorem is applied in Section \ref{s3} to prove an existence result for the problem of Section \ref{s2}. We extend in Section \ref{s4} our considerations of Sections \ref{s2} and \ref{s3} to evolution equations with unbounded linear operators. All theoretical results are illustrated by several examples. Related results are studied in \cite{AF,BF,FT}, but this paper deals with more general cases than in the above-mentioned papers.

\section{Preliminary results}\label{s1}
Let $X$ be a complex Banach space with a norm $\|\cdot\|$.
\begin{definition} \cite{AGP}
A function $g : \R\to X$ is called $(\omega,c)$-periodic if there is a pair $(\omega,c)$, where $c\in \C\setminus \{0\}$, $\omega >0$ such that $g(t+\omega)=cg(t)$ for all $t \in \R$.
\end{definition}
We denote by $\Upsilon_{\omega,c}$ the set of all continuous and $(\omega,c)$-periodic functions $g : \R\to X$, and $\Upsilon_{\omega,c}^1=\Upsilon_{\omega,c}\cap C^1(\R,X)$.

Let us consider first the linear equation
\begin{equation}\label{e1}
\dot{y}=Ay
\end{equation}
for a continuous linear mapping $A\in L(X)$. Then its solution is of the form $y(t)=e ^{At}y_{0}$, $t\in \R$, which is $(\omega, c)$-periodic if and only if it holds
$$
\begin{gathered}
y(t+\omega)=cy(t)\Leftrightarrow e ^{At+A\omega}y_{0}=ce ^{At}y_{0}\Leftrightarrow e ^{At}e^{A\omega}y_{0}=ce^{At}y_{0}\\
\Leftrightarrow e ^{A\omega}y_{0}=cy_{0}\Leftrightarrow (cI-e ^{A\omega})y_{0}=0.
\end{gathered}
$$
In this paper we consider the case when $c$ doesn't belong to spectrum $\sigma(e^{A\omega})$ of $e^{A\omega}$, so by the spectral mapping theorem we suppose
\begin{itemize}
\item[(A1)] $c\neq e^{\omega\lambda}$ for all $\lambda\in\sigma(A)$.
\end{itemize}
Now we pass to the nonhomogeneous linear equation
\begin{equation}\label{e2}
\dot{y}=Ay+f(t)
\end{equation}
for $f\in \Upsilon_{\omega,c}$. We present the following simple observation.

\begin{lemma}\label{lem1} $y\in \Upsilon_{\omega,c}$ if and only if it holds
\begin{equation}\label{b1}
y(\omega)=cy(0).
\end{equation}
\end{lemma}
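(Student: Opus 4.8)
The plan is to exploit that $y$ is implicitly assumed to solve the nonhomogeneous equation \eqref{e2}, so that the statement reduces to a uniqueness argument for an initial value problem. The forward implication is immediate: if $y\in\Upsilon_{\omega,c}$, then $y(t+\omega)=cy(t)$ for every $t\in\R$, and evaluating at $t=0$ gives \eqref{b1}.

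For the converse I would assume \eqref{b1} and introduce the two auxiliary functions $z(t):=y(t+\omega)$ and $w(t):=cy(t)$. The idea is to show that both solve one and the same linear initial value problem and then invoke uniqueness. Differentiating and using that $y$ solves \eqref{e2} gives $\dot z(t)=Ay(t+\omega)+f(t+\omega)$; here the hypothesis $f\in\Upsilon_{\omega,c}$ enters through $f(t+\omega)=cf(t)$, so that $\dot z(t)=Az(t)+cf(t)$. On the other hand $\dot w(t)=c\dot y(t)=c\bigl(Ay(t)+f(t)\bigr)=Aw(t)+cf(t)$. Thus $z$ and $w$ satisfy the same equation $\dot u=Au+cf(t)$, and by \eqref{b1} they share the initial value $z(0)=y(\omega)=cy(0)=w(0)$.

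Since $A\in L(X)$ is bounded and $f$ is continuous, the Cauchy problem for $\dot u=Au+cf(t)$ has a unique solution; equivalently, one may write it explicitly via the variation-of-constants formula $u(t)=e^{At}u(0)+\int_0^t e^{A(t-s)}cf(s)\,ds$ and read off that it is determined by $u(0)$. Hence $z\equiv w$, that is $y(t+\omega)=cy(t)$ for all $t\in\R$, which is exactly $y\in\Upsilon_{\omega,c}$, continuity of $y$ being clear from the fact that it solves \eqref{e2}.

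The computation is entirely routine; the only point requiring care — and the step I would treat as the crux — is recognizing that the periodicity of the forcing, $f(t+\omega)=cf(t)$, is precisely what makes the shifted function $y(\cdot+\omega)$ solve the same equation as $cy(\cdot)$, so that the abstract uniqueness theorem applies. Without this matching of the forcing terms, the two auxiliary functions would obey different equations and the argument would break down.
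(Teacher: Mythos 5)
Your proof is correct, but it takes a genuinely different route from the paper's, and the difference is worth noting. You read the lemma as a statement about globally defined solutions of \eqref{e2}: assuming $y$ solves the equation on all of $\R$, you compare $z(t)=y(t+\omega)$ with $w(t)=cy(t)$, show both solve $\dot u=Au+cf(t)$ (this is where $f\in\Upsilon_{\omega,c}$ enters), note they agree at $t=0$ by \eqref{b1}, and conclude $z\equiv w$ by uniqueness for the Cauchy problem --- legitimate, since $A\in L(X)$ generates the group $e^{At}$ and the variation-of-constants formula settles uniqueness in both time directions. The paper instead reads the lemma as a pure extension statement involving no differential equation at all: given any continuous $y_0$ on $[0,\omega]$ with $y_0(\omega)=cy_0(0)$, it defines $y(t)=c^{k}y_0(t-k\omega)$ for $t\in[k\omega,(k+1)\omega]$, $k\in\Z$, checks that the pieces match at the seams $t=k\omega$, and verifies $y(t+\omega)=cy(t)$ directly. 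The paper's version buys two things yours does not: first, it applies to a function known only on $[0,\omega]$ --- exactly what the fixed-point problem \eqref{fix1} produces, so the equivalence claimed in Section \ref{s2} goes through without first manufacturing a global solution; second, the explicit extension construction is reused verbatim in Lemma \ref{lem3} for mild solutions with an unbounded generator, where your differentiation-plus-uniqueness argument is unavailable (mild solutions need not be differentiable and $A$ is no longer in $L(X)$). Conversely, your argument is the natural one when a classical global solution is already in hand, and it correctly isolates the role of the forcing's $(\omega,c)$-periodicity; just be aware that it proves a narrower statement than the one the paper actually relies on later.
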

\begin{proof}
Clearly $y\in \Upsilon_{\omega,c}$ implies \eqref{b1}. On the other hand, if \eqref{b1} holds for some $y_0\in C([0,\omega],X)$, then we set
$$
y(t)=c^{k}y_0(t-k\om)
$$
for $t\in[k\om,(k+1)\om]$ and $k\in\Z$. Note
$$
\lim_{t\to k\om_+}y(t)=c^k y_0(0)=c^{k-1}y_0(\om)=\lim_{t\to k\om_-}y(t),
$$
so $y(t)$ is well-defined and continuous. Next, for $t\in[k\om,(k+1)\om]$ and $k\in\Z$, we have $t+\om\in[(k+1)\om,(k+2)\om]$ and
$$
y(t+\om)=c^{k+1}y_0(t+\om-(k+1)\om)=c^{k+1}y_0(t-k\om)=cc^{k}y_0(t-k\om)=cy(t).
$$
So $y\in\Upsilon_{\omega,c}$. The proof is finished.
\end{proof}

Set a Banach space $Z=C([0,\omega],X)$ with the maximum norm $\|y\|_0=\max_{t\in[0,\om]}\|y(t)\|$. Now we are ready to prove the following result.

\begin{lemma}\label{lem2} The solution $y\in Z$ of \eqref{e2} satisfying \eqref{b1} is given by
$$
y(t)=\int_{0}^{\omega}K(t,s)f(s),
$$
where
$$
K(t,s)=\begin{cases}ce^{A(t-s)}(cI-e^{A\omega})^{-1}&\ \text{for}\ s\in[0,t],\\
                    e^{A(\omega+t-s)}(cI-e^{A\omega})^{-1}&\ \text{for}\ s\in(t,\omega],
       \end{cases}
$$
and $I$ is the unit mapping.
\end{lemma}
\begin{proof}
The general solution $y\in Z$ of \eqref{e2} has the form
$$
y(t)=e^{At}y_{0}+\int_{0}^{t}e^{A(t-s)}f(s)ds.
$$
The condition \eqref{b1} gives
$$
e^{A\omega}y_{0}+\int_{0}^{\omega}e^{A(\omega-s)}f(s)ds=cy_{0}\Leftrightarrow  y_{0}=(cI-e^{A\omega})^{-1}\int_{0}^{\omega} e^{A(\omega -s)}f(s)ds,
$$
so the solution of \eqref{e2} satisfying \eqref{b1} is as follows
$$
\begin{gathered}
y(t)=e^{At}y_{0}+\int_{0}^{t} e^{A(t-s)}f(s)ds\\
=e^{At}(cI-e^{A\omega})^{-1}\int_{0}^{\omega} e^{A(\omega -s)}f(s)ds+\int_{0}^{t}e^{A(t-s)}f(s)ds\\
=\int_{0}^{t}e^{A(t-s)}\left((cI-e^{A\omega})^{-1}e^{A\omega}+I\right)f(s)ds\\
+\int_t^\omega e^{A(\omega+t-s)}(cI-e^{A\omega})^{-1}f(s)ds\\
=\int_{0}^{t}ce^{A(t-s)}(cI-e^{A\omega})^{-1}f(s)ds+\int_t^\omega e^{A(\omega+t-s)}(cI-e^{A\omega})^{-1}f(s)ds\\
=\int_{0}^{\omega}K(t,s)f(s)ds,
\end{gathered}
$$
since
$$
(cI-e^{A\om})(cI-e^{A\om})^{-1}=I,
$$
implies
$$
c(cI-e^{A\omega})^{-1}=e^{A\om}(cI-e^{A\omega})^{-1}+I=(cI-e^{A\omega})^{-1}e^{A\om}+I.
$$
The proof is completed.
\end{proof}

\section{A uniqueness result}\label{s2}

In this section we consider the equation
\begin{equation}\label{e4}
\dot{y}=Ay+g(t,y)
\end{equation}
for $g\in C(\R\times X,X)$ satisfying
\begin{itemize}
\item[(C1)] $g(t+\omega,cy)=cg(t,y)$ for all $t\in \R$ and $y\in X$.
\item[(C2)] There is a constant $L>0$ such that $\|g(t,y_{1})-g(t,y_{2})\|\le L\|y_{1}-y_{2}\|$ for all $t\in \R$ and $y_{1},y_{2}\in X$.
\end{itemize}
We are looking for solutions of \eqref{e4} in $\Upsilon_{\om,c}$. First we note (C1) implies if $y\in \Upsilon_{\om,c}$ then $g(t,y(t))\in\Upsilon_{\om,c}$. Then by Lemmas \ref{lem1} and \ref{lem2}, our task is equivalent to the fixed point problem
\begin{equation}\label{fix1}
y(t)=\int_{0}^{\omega}K(t,s)g(s,y(s))ds,\quad y\in Z.
\end{equation}
To solve \eqref{fix1}, we define an operator $S : Z\rightarrow Z$ by
$$
(Sy)(t)=\int_{0}^{\omega}K(t,s)g(s,y(s))ds
$$
for $y\in Z$. Clearly $S$ is well-defined. Next, for $y_{1},y_{2}\in Z$ we derive
$$
\begin{gathered}
\|(Sy_{1})(t)- (Sy_{2})(t)\|\leq \int_{0}^{\omega}\| K(t,s) (g(s,y_{1}(s))-g(s,y_{2}(s)))\| ds\\
\leq \int_{0}^{\omega}\| K(t,s)\|\|g(s,y_{1}(s))-g(s,y_{2}(s))\| ds
\leq L\int_{0}^{\omega}\| K(t,s)\|\|y_{1}(s)-y_{2}(s)\|ds\\
\leq L\| y_{1}-y_{2} \|_{0}\int_{0}^{\omega}\| K(t,s)\|ds\le LM\| y_{1}-y_{2} \|_{0}
\end{gathered}
$$
for
\begin{equation}\label{M}
M=\max_{t\in[0,\om]}\int_{0}^{\omega}\|K(t,s)\|ds.
\end{equation}
Therefore we arrive at the inequality
$$
\| Sy_{1}- Sy_{2}\|_{0}\leq LM\| y_{1}-y_{2}\|_{0}
$$
and from Banach fixed point theorem we get the following result.
\begin{theorem}\label{th1}
Suppose (A1) and consider \eqref{e4} under conditions (C1) and (C2). If
\begin{equation}\label{bf}
LM<1
\end{equation}
for $M$ given by \eqref{M}, then \eqref{e4} has a unique $(\om,c)$-periodic solution $y$ satisfying
\begin{equation}\label{est1}
\|y\|_0\le\frac{M\|g(\cdot,0)\|_0}{1-LM}.
\end{equation}
\end{theorem}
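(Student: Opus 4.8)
The plan is to read the theorem as a straightforward packaging of the contraction estimate derived just above the statement into the Banach fixed point framework. First I would note that $Z=C([0,\om],X)$ equipped with the maximum norm $\|\cdot\|_0$ is a complete metric space, so the contraction mapping principle is available. The inequality $\|Sy_1-Sy_2\|_0\le LM\|y_1-y_2\|_0$, already established, together with the hypothesis \eqref{bf} that $LM<1$, shows that $S$ is a contraction on $Z$. Hence $S$ has a unique fixed point $y\in Z$, i.e. the integral equation \eqref{fix1} has a unique solution in $Z$.

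Next I would translate this fixed point back into a statement about the differential equation \eqref{e4}. Condition (C1) guarantees that $g(t,y(t))\in\Upsilon_{\om,c}$ whenever $y\in\Upsilon_{\om,c}$, which is what makes the reduction to \eqref{fix1} legitimate. By Lemma \ref{lem2} the solution $y\in Z$ of \eqref{fix1} is precisely the solution of \eqref{e4} on $[0,\om]$ satisfying the boundary condition \eqref{b1}, and by Lemma \ref{lem1} such a function extends uniquely to an element of $\Upsilon_{\om,c}$. Thus uniqueness of the fixed point of $S$ delivers exactly the existence and uniqueness of the $(\om,c)$-periodic solution claimed.

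Finally, to obtain the a priori bound \eqref{est1}, I would estimate $\|y\|_0$ from the fixed point relation $y=Sy$ by inserting the point $S0$. Since $(S0)(t)=\int_0^\om K(t,s)g(s,0)\,ds$, the definition \eqref{M} of $M$ gives $\|S0\|_0\le M\|g(\cdot,0)\|_0$, where $g(\cdot,0)\in\Upsilon_{\om,c}$ by (C1) so that $\|g(\cdot,0)\|_0$ is well defined. The triangle inequality combined with the contraction estimate then yields
$$
\|y\|_0=\|Sy\|_0\le\|Sy-S0\|_0+\|S0\|_0\le LM\|y\|_0+M\|g(\cdot,0)\|_0,
$$
and solving for $\|y\|_0$, which is permissible because $1-LM>0$, produces \eqref{est1}. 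I do not expect a genuine obstacle here, since the analytic content sits in the contraction estimate already proved; the only points requiring a little care are checking that $g(\cdot,0)$ is itself $(\om,c)$-periodic (so that $\|g(\cdot,0)\|_0$ is the correct quantity) and that the passage through Lemmas \ref{lem1} and \ref{lem2} correctly encodes both the equation \eqref{e4} and the periodicity constraint \eqref{b1}.
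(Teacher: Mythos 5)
Your proposal is correct and follows essentially the same route as the paper: the contraction estimate $\|Sy_1-Sy_2\|_0\le LM\|y_1-y_2\|_0$ plus \eqref{bf} gives a unique fixed point via the Banach fixed point theorem, Lemmas \ref{lem1} and \ref{lem2} with (C1) identify fixed points with $(\om,c)$-periodic solutions, and the bound \eqref{est1} comes from $\|y\|_0=\|Sy\|_0\le LM\|y\|_0+M\|g(\cdot,0)\|_0$. Your version merely makes explicit the triangle-inequality step through $S0$ that the paper leaves implicit.
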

\begin{proof}
The uniqueness and existence result follows from  Banach fixed point theorem along with Lemmas \ref{lem1} and \ref{lem2}. Furthermore, by
$$
\|y\|_0=\|Sy\|_0\le LM\|y\|_0+M\|g(\cdot,0)\|_0,
$$
we have \eqref{est1}, which finishes the proof.
\end{proof}
\begin{rem}
\rm We can estimate $M$ as follows

1.
$$
\begin{gathered}
\int_{0}^{\omega}\|K(t,s)\|ds\\
\le |c|\|(cI-e^{A\omega})^{-1}\| \int_{0}^{t}e^{\| A\| (t-s)}ds+\|e^{A\omega}(cI-e^{A\omega})^{-1}\| \int_{t}^{\omega}e^{\| A\| (t-s)}ds\\
=|c|\|(cI-e^{A\omega})^{-1}\| \frac{e^{\| A \| t}-1}{\| A \|}+\|e^{A\omega}(cI-e^{A\omega})^{-1}\|\frac{1-e^{\| A\| (t-\omega)}}{\| A\|}\\
\leq \frac{e^{\| A \|\omega}-1}{\| A \|}\max\left\{|c|\|(cI-e^{A\omega})^{-1}\|,\frac{\|e^{A\omega}(cI-e^{A\omega})^{-1}\|}{e^{\| A\|\omega}}\right\},
\end{gathered}
$$
so
\begin{equation}\label{Mb}
\begin{gathered}
M\le\frac{e^{\| A \|\omega}-1}{\| A \|}\max\left\{|c|\|(cI-e^{A\omega})^{-1}\|,\frac{\| (cI-e^{A\omega})^{-1}e^{A\omega}\|}{e^{\| A\|\omega}}\right\}
\end{gathered}
\end{equation}
2.
$$
\begin{gathered}
\int_{0}^{\omega}\|K(t,s)\|ds\\
\le |c| \int_{0}^{t}\|e^{A(t-s)}(cI-e^{A\omega})^{-1}\|ds+\int_{t}^{\om}\|e^{A(\om+t-s)}(cI-e^{A\omega})^{-1}\|ds\\
=|c| \int_{0}^{t}\|e^{As}(cI-e^{A\omega})^{-1}\|ds+\int_{t}^{\om}\|e^{As}(cI-e^{A\omega})^{-1}\|ds\\
\leq \max\{|c|,1\}\int_{0}^{\om}\|e^{As}(cI-e^{A\omega})^{-1}\|ds,
\end{gathered}
$$
so
\begin{equation}\label{Mc}
\begin{gathered}
M\le\max\{|c|,1\}\int_{0}^{\om}\|e^{As}(cI-e^{A\omega})^{-1}\|ds.
\end{gathered}
\end{equation}

\end{rem}

Now we can give an example.

\begin{exmp}\label{exp1}
\rm We consider the case $X=\C^2$ for $c=-1$, $\omega=\pi$ and
$$
\begin{gathered}
A=\begin{pmatrix}
2 & -4\\
6 & -8
\end{pmatrix}\\
g(t,y)=(g_{1}(t,y),g_{2}(t,y))=a\left(\sin t\cos (y_{1}+y_{2}),\cos 2t\sin (y_{1}-y_{2})\right)\\
y=(y_1,y_2).
\end{gathered}
$$
Since all parameters are real, we just consider $X=\R^2$. Clearly (C1) holds. Since $\sigma(A)=\{-4,-2\}$, (A1) is satisfied. Next by the Mean value theorem for vector-valued functions
$$
L=\max_{y\in \mathbb{R}^{2}, t\in \mathbb{R}}\| g_{y}(t,y)\|
$$
for
\[g_{y}=\begin{pmatrix}
-a\sin t \sin (y_{1}+y_{2}) & -a\sin t \sin (y_{1}+y_{2})\\
a\cos 2t \cos (y_{1}-y_{2}) & -a\cos 2t \cos (y_{1}-y_{2})
\end{pmatrix}\]
Furthermore, we have
$$
e^{As}(cI-e^{A\omega})^{-1}=\begin{pmatrix}\dis\frac{2e^{4\pi-4s}}{1+e^{4\pi}}-\frac{3}{2}e^{\pi-2s}\sech\pi & e^{\pi-2s}\sech\pi-\frac{2e^{4\pi-4s}}{1+e^{4\pi}}\\
                                           \dis\frac{3e^{4\pi-4s}}{1+e^{4\pi}}-\frac{3}{2}e^{\pi-2s}\sech\pi & e^{\pi-2s}\sech\pi-\frac{3e^{4\pi-4s}}{1+e^{4\pi}}
                                           \end{pmatrix}
$$

Now we consider 3 standard norms on $\R^2$ to estimate $L$ and $M$.

\textbf{Case 1.} $\|y\|_1=|y_1|+|y_2|$. Then we derive
$$
\begin{gathered}
L=\max_{y\in \R^{2}, t\in \R}\| g_{y}(t,y)\|_{1}\\
=|a|\max_{y\in \R^{2}, t\in \R}\left(|\sin t\sin (y_{1}+y_{2})|+|\cos 2t\cos (y_{1}-y_{2})|\right)\\
=|a|\max_{t\in \R}\left(|\sin t|+|\cos 2t|\right)=2|a|
\end{gathered}
$$
and by \eqref{Mc}
$$
M\le1.73883.
$$
So condition \eqref{bf} holds if
\begin{equation}\label{a1}
|a|<0.287549.
\end{equation}

\textbf{Case 2.} $\|y\|_\infty=\max\{|y_1|,|y_2|\}$. Then we derive
$$
\begin{gathered}
L=\max_{y\in \R^{2}, t\in \R}\| g_{y}(t,y)\|_\infty\\
=2|a|\max_{y\in \mathbb{R}^{2},t\in \mathbb{R}}\max\left\{|\sin t\sin (y_{1}+y_{2})|,|\cos 2t\cos (y_{1}-y_{2})|\right\}\\
=2|a|\max_{t\in \R}\max\left\{|\sin t|,|\cos 2t|\right\}=2|a|,
\end{gathered}
$$
and by \eqref{Mc}
$$
M\le1.4907.
$$
So condition \eqref{bf} holds if
\begin{equation}\label{a2}
|a|<0.335414.
\end{equation}

\textbf{Case 3.} $\|y\|_2=\sqrt{y_1^2+y_2^2}$. Then we derive
$$
\begin{gathered}
L=\max_{y\in \R^{2}, t\in \R}\| g_{y}(t,y)\|_2\\
=\sqrt{2}|a|\max_{y\in \mathbb{R}^{2},t\in \mathbb{R}}\max\left\{|\cos 2t\cos(y_1-y_2)|,|\sin t\sin(y_1+y_2)|\right\}\\
=\sqrt{2}|a|\max_{t\in \R}\max\left\{|\sin t|,|\cos 2t|\right\}=\sqrt{2}|a|,
\end{gathered}
$$
and by \eqref{Mc}
$$
M\le1.40635.
$$
So condition \eqref{bf} holds if
\begin{equation}\label{a3}
|a|<0.502795.
\end{equation}
Consequently, the best estimate from \eqref{a1}, \eqref{a2} and \eqref{a3} is \eqref{a3} corresponding the the norm $\|\cdot\|_2$, when there is a unique $\pi$-antiperiodic solution $y(t)$ which is nonconstant.
\end{exmp}

\section{An existence result}\label{s3}

Now we consider instead (C2) the following condition
\begin{itemize}
\item[(C3)] There are constants $g_1\ge0$ and $g_2\ge0$ such that $\|g(t,y)\|\le g_1+g_2\|y\|$ for all $t\in \R$ and $y\in X$.
\end{itemize}
Then like above, we derive
$$
\begin{gathered}
\|(Sy)(t)\|\leq \int_{0}^{\omega}\|K(t,s)g(s,y(s))\| ds\\
\le(g_{1}+g_{2}\|y\|_0)\int_{0}^{\omega}\|K(t,s)\|ds
\leq M(g_{1}+g_{2}\| y\|_0).
\end{gathered}
$$
Therefore from Schauder fixed point theorem we get the following result.
\begin{theorem}\label{th2}
Let $\dim X<\infty$. Suppose (A1) and consider \eqref{e4} under conditions (C1) and (C3). If
\begin{equation}\label{bf2}
g_2M<1
\end{equation}
for $M$ given by \eqref{M}, then \eqref{e4} has a $(\om,c)$-periodic solution $y$ with $\|y\|_0\le \frac{Mg_1}{1-Mg_2}$.
\end{theorem}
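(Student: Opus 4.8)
The plan is to apply the Schauder fixed point theorem to the same operator $S$, but now on a suitable invariant ball rather than using a contraction argument. First I would use the growth estimate $\|Sy\|_0\le M(g_1+g_2\|y\|_0)$ derived just above the statement to locate an invariant set. Setting $R=\frac{Mg_1}{1-Mg_2}$, which is well-defined and nonnegative precisely because $g_2M<1$, I would check that the closed ball $\Omega=\{y\in Z:\|y\|_0\le R\}$ satisfies $S(\Omega)\subseteq\Omega$: indeed $\|y\|_0\le R$ forces $\|Sy\|_0\le Mg_1+Mg_2R=R$ by the very choice of $R$. This $\Omega$ is nonempty, bounded, closed and convex, as Schauder requires, and it also immediately yields the asserted bound $\|y\|_0\le\frac{Mg_1}{1-Mg_2}$ for any fixed point.

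Next I would verify that $S$ is continuous on $\Omega$. If $y_n\to y$ in $Z$, then the values $y_n(s)$ all lie in a fixed bounded (hence, since $\dim X<\infty$, relatively compact) subset of $X$, so by continuity of $g$ one gets $g(s,y_n(s))\to g(s,y(s))$ uniformly in $s\in[0,\omega]$; since $\int_0^\omega\|K(t,s)\|ds\le M$ is uniformly bounded in $t$, it follows that $\|Sy_n-Sy\|_0\to0$.

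The heart of the argument is showing that $S(\Omega)$ is relatively compact, for which I would invoke the Arzel\`a--Ascoli theorem, and this is exactly where the hypothesis $\dim X<\infty$ enters: it guarantees that the pointwise-bounded set $\{(Sy)(t):y\in\Omega\}$ is relatively compact in $X$ for each fixed $t$. Uniform boundedness of $S(\Omega)$ is already in hand from $\|Sy\|_0\le R$. For equicontinuity, the cleanest route is to observe that each $z=Sy$ is, by Lemma \ref{lem2}, a genuine solution of \eqref{e4}, so $z\in C^1([0,\omega],X)$ with $\dot z(t)=Az(t)+g(t,y(t))$. Using (C3) together with $\|z\|_0\le R$, I obtain the uniform derivative bound $\|\dot z(t)\|\le\|A\|R+g_1+g_2R$ independent of $y\in\Omega$, whence $\|(Sy)(t_1)-(Sy)(t_2)\|\le(\|A\|R+g_1+g_2R)\,|t_1-t_2|$, which is precisely equicontinuity. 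Alternatively one can split the integral at $s=t$ and estimate the $t$-dependence of the two branches of $K(t,s)$ directly, but the ODE viewpoint sidesteps that case analysis.

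With these three properties in place, Schauder's theorem yields a fixed point $y\in\Omega$ of $S$, and by Lemmas \ref{lem1} and \ref{lem2} this $y$ is an $(\omega,c)$-periodic solution of \eqref{e4} obeying the stated bound. The step I expect to be the main obstacle is the equicontinuity/compactness argument --- not because it is deep, but because it is the only place requiring genuine care in exploiting $\dim X<\infty$; bounded sets need not be precompact in infinite dimensions, which is why this hypothesis is absent from Theorem \ref{th1} and indispensable here.
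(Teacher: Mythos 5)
Your proposal is correct and follows essentially the same route as the paper: Schauder's fixed point theorem applied to $S$ on the invariant ball $B(r_0)$ with $r_0=\frac{Mg_1}{1-Mg_2}$, with compactness supplied by the Arzel\`a--Ascoli theorem. The paper's proof is just a terser version of yours; your added details (continuity of $S$ via uniform continuity of $g$ on compact sets, and equicontinuity via the uniform derivative bound $\|\dot z(t)\|\le\|A\|R+g_1+g_2R$ coming from the ODE) are exactly the steps the paper leaves implicit, and they correctly pinpoint where $\dim X<\infty$ is used.
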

\begin{proof}
Set $B(r_0)=\left\{y\in Z\mid \|y\|_0\le r_0\right\}$ for $r_0=\frac{Mg_1}{1-Mg_2}$. Then for any $y\in B(r_0)$ the above computation gives
$$
\|Sy\|_0\leq M(g_{1}+g_{2}\|y\|_0)\le M(g_{1}+g_{2}r_0)=r_0.
$$
Hence $S : B(r_0)\to B(r_0)$. Arzel\`{a}-Ascoli theorem implies the compactness of $S$. Thus Schauder fixed point theorem gives the result. The proof is finished.
\end{proof}

\begin{exmp}\label{exp2}
\rm Consider the problem from Example \ref{exp1}. Then
$$
\|g(t,y)\|_1=|a|\left(|\sin t\cos (y_{1}+y_{2})|+|\cos 2t\sin (y_{1}-y_{2})|\right)\le 2|a|,
$$
so we have $g_1=2|a|$ and $g_2=0$. Consequently, there is a $\pi$-antiperiodic solution $y$ for any $0\ne a\in\R$ with $\|y(t)\|_1\le 3.47767|a|$ for any $t\in\R$. Note when \eqref{a3} holds, we have a unique such a solution.
\end{exmp}

\begin{exmp}\label{exp3}
\rm In this example we consider the case for $c=-1$, $\omega=\pi$ and $g(t,y)=(g_{1}(t,y),g_{2}(t,y))=(a\sin t(|y_{1}+y_{2}|+1),a\cos t|y_{1}-y_{2}|)$ and $A$ from Example \ref{exp1}. We again consider 3 standard norms on $\R^2$ to estimate $g_1$, $g_2$ and $M$.

\textbf{Case 1.} $\|y\|_1=|y_1|+|y_2|$. Then we derive
$$
\begin{gathered}
\|g(t,y)\|_{1}=|a|\left(|\sin t(|y_{1}+y_{2}|+1)|+|\cos t|y_{1}-y_{2}||\right)\\
\le |a|+|a|\left(|\sin t|+|\cos t|\right)\|y\|_1\doteq|a|+1.41421|a|\|y\|_1.
\end{gathered}
$$
Hence $g_1=|a|$, $g_2\doteq1.41421|a|$ and $M$ is given in the case 1 of Example \ref{exp1}. So condition \eqref{bf2} holds if
\begin{equation}\label{a12}
|a|<0.406656.
\end{equation}

\textbf{Case 2.} $\|y\|_\infty=\max\{|y_1|,|y_2|\}$. Then we derive
$$
\begin{gathered}
\|g(t,y)\|_{\infty}=|a|\max\left\{|\sin t(|y_{1}+y_{2}|+1)|,|\cos t|y_{1}-y_{2}||\right\}\\
\le |a|\max\left\{|\sin t|y_{1}+y_{2}||+1,|\cos t|y_{1}-y_{2}||+1\right\}\\
\le |a|+2|a|\max\left\{|\sin t|,|\cos t|\right\}\|y\|_\infty=|a|+2|a|\|y\|_\infty.
\end{gathered}
$$
Hence $g_1=|a|$, $g_2=2|a|$ and $M$ is given in the case 2 of Example \ref{exp1}. So condition \eqref{bf2} holds if
\begin{equation}\label{a22}
|a|<0.335414.
\end{equation}

\textbf{Case 3.} $\|y\|_2=\sqrt{y_1^2+y_2^2}$. Then we derive
$$
\begin{gathered}
\|g(t,y)\|_{2}=|a|\sqrt{\sin^2t(|y_{1}+y_{2}|+1)^2+\cos^2t(y_{1}-y_{2})^2}\\
\le |a|\sqrt{2\sin^2t+2\sin^2t(y_{1}+y_{2})^2+\cos^2t(y_{1}-y_{2})^2}\\
\le \sqrt{2}|a|+|a|\sqrt{2\sin^2t(y_{1}+y_{2})^2+\cos^2t(y_{1}-y_{2})^2}\\
\le \sqrt{2}|a|+\sqrt{2}|a|\max\left\{\sqrt{2}|\sin t|,|\cos t|\right\}\|y\|_2\le \sqrt{2}|a|+2|a|\|y\|_2.
\end{gathered}
$$
Hence $g_1=\sqrt{2}|a|$, $g_2=2|a|$ and $M$ is given in the case 3 of Example \ref{exp1}. So condition \eqref{bf2} holds if
\begin{equation}\label{a32}
|a|<0.35553.
\end{equation}
Consequently, the best estimate from \eqref{a12}, \eqref{a22} and \eqref{a32} is \eqref{a12} corresponding the the norm $\|\cdot\|_1$, when there is a $\pi$-antiperiodic solution $y(t)$ which is nonconstant.
\end{exmp}

Related results to Examples \ref{exp1}, \ref{exp2} and \ref{exp3} are given in \cite{BF,FT,FNO}, but our analysis is different since we focus in these examples on finding optimal norms.

\section{Extension to mild solutions}\label{s4}

In this section, we extend the above results of \eqref{e4} by assuming the following condition
\begin{itemize}
\item[(A2)] $A$ is an infinitesimal generator of a strongly continuous semigroup of bounded linear operators $\{S(t)\}_{t\in\R_+}$ in $X$ \cite{P}.
\end{itemize}
We know \cite{P} that there are constants $Q\ge1$ and $\gamma\in\R$ such that
\begin{equation}\label{gr1}
\|S(t)\|\le Qe^{\gamma t},\quad t\ge0.
\end{equation}
So we consider in the above definitions and assumptions $\R_+=[0,\infty)$ instead of $\R$. Then we look for a $(\om,c)$-periodic mild solution $y(t)\in C(\R_+,X)$ of \eqref{e4}, i.e., $y$ solving the equation
\begin{equation}\label{mild}
y(t)=S(t)y_0+\int_0^tS(t-s)g(s,y(s))ds
\end{equation}
for some $y_0\in X$ and any $t\in\R_+$.
\begin{lemma}\label{lem3}
Suppose (C1). If there is $y\in Z$ satisfying \eqref{b1} and \eqref{mild}, then the unique extension $y\in\Upsilon_{\om,c}$ of $y(t)$ to $\R_+$ satisfies \eqref{mild} on the whole $\R_+$.
\end{lemma}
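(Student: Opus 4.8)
The plan is to first produce the extension and then show it solves \eqref{mild} everywhere by an induction that shifts by one period at a time. Since $y\in Z$ satisfies \eqref{b1}, Lemma \ref{lem1} (read over $\R_+$ instead of $\R$, i.e. with $k\ge0$) gives the unique $(\om,c)$-periodic extension $y(t)=c^ky(t-k\om)$ for $t\in[k\om,(k+1)\om]$, where on the right $y(t-k\om)$ is the original function on $[0,\om]$. Evaluating \eqref{mild} at $t=0$ forces the vector appearing there to be $y_0=y(0)$, which I fix from now on. I will prove by induction on $k\ge0$ that \eqref{mild} holds for every $t\in[k\om,(k+1)\om]$; the base case $k=0$ is exactly the hypothesis, so it remains to run the inductive step, which amounts to the implication: if \eqref{mild} holds at some $t\ge0$, then it holds at $t+\om$.

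The first ingredient for this step is that the forcing term inherits the $(\om,c)$-periodicity. Writing $h(s):=g(s,y(s))$, the extension identity $y(s+\om)=cy(s)$ together with (C1) yields $h(s+\om)=g(s+\om,cy(s))=cg(s,y(s))=ch(s)$. This is what allows the part of the integral over $[\om,t+\om]$ to be folded back onto $[0,t]$.

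Now I would evaluate the right-hand side of \eqref{mild} at $t+\om$, split $\int_0^{t+\om}=\int_0^{\om}+\int_\om^{t+\om}$, substitute $s\mapsto s+\om$ in the second integral, and use $h(s+\om)=ch(s)$ to turn it into $c\int_0^tS(t-s)h(s)\,ds$. For the remaining boundary part I use the semigroup law $S(t+\om-s)=S(t)S(\om-s)$ (valid since $\om-s\ge0$ for $s\in[0,\om]$) and $S(t+\om)=S(t)S(\om)$ to factor out $S(t)$, obtaining
$$
S(t)\Bigl(S(\om)y_0+\int_0^{\om}S(\om-s)h(s)\,ds\Bigr)=S(t)y(\om),
$$
where the last equality is \eqref{mild} at the single point $t=\om$ (the base case). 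By \eqref{b1} we have $y(\om)=cy_0$, so this boundary part equals $cS(t)y_0$. Adding the two contributions gives $c\bigl(S(t)y_0+\int_0^tS(t-s)h(s)\,ds\bigr)=cy(t)$ by \eqref{mild} at $t$, and $cy(t)=y(t+\om)$ by the extension. Hence \eqref{mild} holds at $t+\om$, completing the step and the induction.

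The main obstacle, and really the only nontrivial point, is the collapse of the boundary contribution $S(t+\om)y_0+\int_0^{\om}S(t+\om-s)h(s)\,ds$ to $cS(t)y_0$; this is exactly where the boundary condition \eqref{b1} and the cocycle structure of mild solutions combine, and it is the reason the hypothesis is imposed at $t=\om$. A minor point to check along the way is that every argument of $S$ stays in $\R_+$, since the semigroup is defined only there; this holds because $t,\om\ge0$ and the integration variables are confined to $[0,t+\om]$.
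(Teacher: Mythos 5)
Your proof is correct and follows essentially the same route as the paper's: the same splitting of $\int_0^{t+\om}$ at $\om$, the same factoring of $S(t)$ out of the boundary part (which collapses to $cS(t)y_0$ via \eqref{b1} and the identity at $t=\om$), and the same folding of the tail integral onto $[0,t]$ using (C1). The only difference is organizational: the paper packages the conclusion by showing that the right-hand side $z(t)$ of \eqref{mild} belongs to $\Upsilon_{\om,c}$ and then invoking uniqueness of the $(\om,c)$-periodic extension of $y|_{[0,\om]}$, whereas you run an induction period by period; both wrap-ups rest on the identical computation.
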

\begin{proof}
The unique extension $y(t)$ is given in the proof of Lemma \ref{lem1}. Setting
$$
z(t)=S(t)y_0+\int_0^tS(t-s)g(s,y(s))ds
$$
and using
$$
S(\om)y_0+\int_0^{\om}S(t-s)g(\om,y(s))ds=y(\om)=cy(0)=cy_0,
$$
we derive for $t\in \R_+$
$$
\begin{gathered}
z(t+\om)=S(t+\om)y_0+\int_0^{t+\om}S(t+\om-s)g(s,y(s))ds\\
=S(t)S(\om)y_0+\int_0^{\om}S(t+\om-s)g(s,y(s))ds+\int_\om^{t+\om}S(t+\om-s)g(s,y(s))ds\\
=S(t)\left(S(\om)y_0+\int_0^{\om}S(\om-s)g(s,y(s))ds\right)\\
+\int_0^{t}S(t-s)g(s+\om,y(s+\om))ds=cS(t)y_0+\int_0^{t}S(t-s)g(s+\om,cy(s))ds\\
=cS(t)y_0+c\int_0^{t}S(t-s)g(s,y(s))ds=cz(t).
\end{gathered}
$$
Consequently it holds $z\in\Upsilon_{\om,c}$. But $z(t)=y(t)$ on $[0,\om]$ and the $(\om,c)$-periodic extension of $y(t)$ is unique, so $z(t)=y(t)$ on $\R_+$, this means that $y(t)$ satisfies \eqref{mild} on $\R_+$. The proof is finished.
\end{proof}

By the above lemma to find a $(\om,c)$-periodic mild solution $y(t)\in C(\R_+,X)$ of \eqref{e4} is equivalent for searching $y\in Z$ satisfying \eqref{b1} and \eqref{mild}. Then we get
$$
(cI-S(\om))y_0=\int_0^{\om}S(\om-s)g(s,y(s))ds.
$$
So extending (A1) to
\begin{itemize}
\item[(A3)] $c\notin\sigma(S(\om))$.
\end{itemize}
we get
$$
\begin{gathered}
y(t)=S(t)(cI-S(\om))^{-1}\int_0^{\om}S(\om-s)g(s,y(s))ds+\int_0^tS(t-s)g(s,y(s))ds\\
=\int_0^tS(t-s)\left((cI-S(\om))^{-1}S(\om)+I\right)g(s,y(s))ds\\
+\int_t^\om S(\om+t-s)(cI-S(\om))^{-1}g(s,y(s))ds\\
=\int_0^tcS(t-s)(cI-S(\om))^{-1}g(s,y(s))ds\\
+\int_t^\om S(\om+t-s)(cI-S(\om))^{-1}g(s,y(s))ds
\end{gathered}
$$
giving
\begin{equation}\label{mildb}
y(t)=\int_0^\om G(t,s)g(s,y(s))ds
\end{equation}
for
$$
G(t,s)=\begin{cases}cS(t-s)(cI-S(\om))^{-1}&\quad 0\le s\le t\le\om,\\
                    S(\om+t-s)(cI-S(\om))^{-1}&\quad 0\le t<s\le\om.
       \end{cases}
$$
Next, using \eqref{gr1} for any $f\in Z$, we derive
$$
\begin{gathered}
\left\|\int_0^\om G(t,s)f(s)ds\right\|\le \int_0^\om \|G(t,s)f(s)\|ds\\
\le \int_0^t\|cS(t-s)(cI-S(\om))^{-1}f(s)\|ds\\
+\int_t^\om\|S(\om+t-s)(cI-S(\om))^{-1}f(s)\|ds\\
\le Q\|(cI-S(\om))^{-1}\|\|f\|_0\left(|c|\int_0^te^{\ga(t-s)}ds
+\int_t^\om e^{\ga(\om+t-s)}ds\right)\\
=Q\|(cI-S(\om))^{-1}\|\|f\|_0\left(|c|\frac{e^{\ga t}-1}{\ga}+\frac{e^{\ga\om}-e^{\ga t}}{\ga}\right)\le U\|f\|_0.
\end{gathered}
$$
Hence we arrive at
$$
\left\|\int_0^\om G(\cdot,s)f(s)ds\right\|_0\le U\|f\|_0
$$
for any $f\in Z$ and for
\begin{equation}\label{U}
U=\begin{cases} \frac{Q(e^{\ga\om}-1)}{\ga}\|(cI-S(\om))^{-1}\|\max\{|c|,1\} & \ga\ne0\\
                Q\om\|(cI-S(\om))^{-1}\|\max\{|c|,1\} & \ga=0
\end{cases}
\end{equation}
 Now we can extend Theorem \ref{th1} as follows
\begin{theorem}\label{th3}
Suppose (A2), (A3) and consider \eqref{e4} under conditions (C1) and (C2).
If
\begin{equation}\label{bf3}
LU<1,
\end{equation}
then \eqref{e4} has a unique $(\om,c)$-periodic mild solution $y$ satisfying
\begin{equation}\label{est1b}
\|y\|_0\le\frac{U\|g(\cdot,0)\|_0}{1-LU},
\end{equation}
where $U$ is given by \eqref{U}.
\end{theorem}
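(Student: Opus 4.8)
The plan is to mirror the argument of Theorem \ref{th1} exactly, with the Green operator built from $G(t,s)$ replacing the one built from $K(t,s)$. First I would invoke Lemma \ref{lem3}: under (C1), producing a $(\om,c)$-periodic mild solution of \eqref{e4} is equivalent to finding $y\in Z$ satisfying both the boundary condition \eqref{b1} and the mild formulation \eqref{mild} on $[0,\om]$. Using (A3) so that $(cI-S(\om))^{-1}$ exists, the computation preceding the theorem already reduces this to the fixed point equation \eqref{mildb}, namely $y(t)=\int_0^\om G(t,s)g(s,y(s))ds$. Thus it suffices to solve \eqref{mildb} in $Z$.

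Next I would define the operator $T:Z\to Z$ by $(Ty)(t)=\int_0^\om G(t,s)g(s,y(s))ds$. Before estimating, one must check $T$ is well-defined, i.e. that $Ty$ is continuous in $t$; this is the only point genuinely requiring the semigroup hypothesis (A2) rather than a bounded $A$, since strong continuity of $t\mapsto S(t)x$ together with the uniform bound \eqref{gr1} lets one pass to the limit inside the integral across the two branches of $G$. Granting this, for $y_1,y_2\in Z$ I would form $g(\cdot,y_1(\cdot))-g(\cdot,y_2(\cdot))$, apply the Lipschitz bound (C2), and feed the result into the already-established kernel estimate $\left\|\int_0^\om G(\cdot,s)f(s)ds\right\|_0\le U\|f\|_0$ with $f=g(\cdot,y_1)-g(\cdot,y_2)$. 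This yields
$$
\|Ty_1-Ty_2\|_0\le U\,\|g(\cdot,y_1)-g(\cdot,y_2)\|_0\le LU\,\|y_1-y_2\|_0.
$$

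Since \eqref{bf3} gives $LU<1$, the operator $T$ is a contraction on the complete space $Z$, so the Banach fixed point theorem supplies a unique fixed point $y$, and Lemma \ref{lem3} upgrades it to the unique $(\om,c)$-periodic mild solution on $\R_+$. For the a priori bound I would take $f=g(\cdot,y(\cdot))$ and split $g(s,y(s))=\left(g(s,y(s))-g(s,0)\right)+g(s,0)$, estimating the first term by $L\|y\|_0$ and the second by $\|g(\cdot,0)\|_0$ through (C2); applying the same kernel estimate gives $\|y\|_0=\|Ty\|_0\le LU\|y\|_0+U\|g(\cdot,0)\|_0$, whence \eqref{est1b} after rearranging.

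I expect the only genuine obstacle to be the well-definedness and continuity verification for $T$: the kernel $G(t,s)$ jumps across $s=t$, and $S(t-s)$ is only strongly, not uniformly, continuous, so one cannot simply bound differences in operator norm as in the bounded case. The remedy is to split $\int_0^\om$ at $s=t$, control the integrand uniformly via \eqref{gr1}, and invoke dominated convergence together with pointwise strong continuity in $s$; every other step is a verbatim repetition of the bounded-operator computation from Theorem \ref{th1}.
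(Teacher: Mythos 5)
Your proposal is correct and follows exactly the route the paper intends: the paper gives no separate proof of Theorem \ref{th3}, but derives the fixed point equation \eqref{mildb} and the kernel bound $\left\|\int_0^\om G(\cdot,s)f(s)ds\right\|_0\le U\|f\|_0$ immediately before the statement, leaving the contraction argument of Theorem \ref{th1} (with $U$ in place of $M$) implicit, which is precisely what you carry out. Your extra verification that $Ty$ is continuous in $t$ despite $S(\cdot)$ being only strongly continuous is a point the paper glosses over entirely, and your dominated-convergence remedy for it is sound.
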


\begin{rem}\label{rem1}
\rm Note under (A2) and (C2), there is a unique mild solution of \eqref{e4} on $\R_+$ for any $y_0\in Y$ depending continuously on $y_0$ \cite{P}.
\end{rem}

If we consider instead of (A2) the following assumption
\begin{itemize}
\item[(A4)] $A$ is an infinitesimal generator of a strongly continuous group of bounded linear operators $\{S(t)\}_{t\in\R}$ in $X$ \cite{P}
\end{itemize}
then we replace $\R_+$ to $\R$ in the above results.

\begin{exmp}\label{exam1}
\rm We consider a nonlinear heat equation with a forcing
\begin{equation}\label{sg}
\begin{gathered}
y_t-y_{xx}+\frac{y^3}{2(y^2+1)}=a\sin t,\quad 0\ne a\in\R\\
y(0,t)=y(\pi,t)=0,\quad t\ge0,\, x\in[0,\pi].
\end{gathered}
\end{equation}
Now we have $c=-1$, $\om=\pi$, a real $X=L^2(0,\pi)$ with a norm $\|y\|=\sqrt{\int_0^\pi y(t)^2dt}$ and $Ay=y_{xx}$ with
$$
D(A)=\{y\in X\mid y',y''\in X,\, y(0)=y(\pi)=0\}.
$$
Clearly (C1) holds. It is well-known that the sequence $\left\{\sqrt{\frac{2}{\pi}}\sin kx\right\}_{k\in\N}$ is an orthonormal basis of $X$. If
$$
y_0(x)=\sum_{k\in\N}y_{0k}\sqrt{\frac{2}{\pi}}\sin kx\in X,\quad \|y_0\|=\sqrt{\sum_{k\in\N}y_{0k}^2},
$$
then
\begin{equation}\label{st}
S(t)y_0=\sum_{k\in\N}e^{-k^2t}y_{0k}\sqrt{\frac{2}{\pi}}\sin kx \Rightarrow \|S(t)y_0\|=\sqrt{\sum_{k\in\N}e^{-2k^2t}y_{0k}^2}\le e^{-t}\|y_0\|.
\end{equation}
Hence $Q=1$ and $\ga=-1$. Moreover, $\sigma(S(\pi))=\{e^{-\pi k^2}\}_{k\in\N}$, so (A3) is verified. Next, we derive
$$
(-I-S(\pi))^{-1}y_0=-\sum_{k\in\N}\frac{1}{1+e^{-\pi k^2}}y_{0k}\sqrt{\frac{2}{\pi}}\sin kx \Rightarrow \|(-I-S(\pi))^{-1}\|=1.
$$
Hence $U=1-e^{-\pi}$ (see \eqref{U}). Now, the function $y\to\frac{y^3}{2(y^2+1)}$ has a Lipschitz constant $\frac{9}{16}$, so we have $L=\frac{9}{16}$ in (C2). Thus
$$
LU=\frac{9(1-e^{-\pi })}{16}\doteq0.538192<1
$$
and \eqref{bf3} is verified. By Theorem \ref{th3}, \eqref{sg} has a unique $\pi$-antiperiodic mild solution for any $0\ne a\in\R$ satisfying
$$
\max_{t\in[0,\pi]}\|y(\cdot,t)\|\le\frac{16\left(e^{\pi }-1\right)\sqrt{\pi}}{9+7e^{\pi}}|a|\doteq3.67222|a|.
$$
\end{exmp}

\begin{exmp}
\rm We consider a nonlinear Schr\"odinger equation with a forcing
\begin{equation}\label{sch}
\begin{gathered}
\frac{1}{\imath}y_t-y_{xx}+\frac{|y|^2y}{5(|y|^2+1)}=a(1+\sin^2x)e^{\frac{t}{4}\imath},\quad 0\ne a\in\C\\
y(x,t)=y(x+2\pi,t)=0,\quad t\ge0,\, x\in\R.
\end{gathered}
\end{equation}
Now we have $c=e^{\frac{\pi}{4}\imath}=\frac{1+\imath}{\sqrt{2}}$, $\om=\pi$, a complex $X=L^2(0,2\pi)$ with a norm $\|y\|=\sqrt{\int_0^{2\pi}|y(t)|^2dt}$ and $Ay=\imath y_{xx}$ with
$$
D(A)=\{y\in X\mid y',y''\in X,\, y(0)=y(\pi),\, y'(0)=y'(\pi)\}.
$$
Clearly (C1) holds. It is well-known that the sequence $\left\{\sqrt{\frac{1}{2\pi}}e^{kx\imath}\right\}_{k\in\Z}$ is an orthonormal basis of $X$. If
$$
y_0(x)=\sum_{k\in\Z}y_{0k}\sqrt{\frac{1}{2\pi}}e^{kx\imath}\in X,\quad \|y_0\|=\sqrt{\sum_{k\in\Z}y_{0k}^2},
$$
then
$$
S(t)y_0=\sum_{k\in\Z}e^{-k^2t\imath}y_{0k}\sqrt{\frac{1}{2\pi}}e^{kx\imath} \Rightarrow \|S(t)y_0\|=\sqrt{\sum_{k\in\Z}y_{0k}^2}=\|y_0\|.
$$
Hence $Q=1$ and $\ga=0$. Note that now (A4) holds. Moreover, $\sigma(S(\pi))=\{\pm1\}$, so (A3) is verified. Next, we derive
$$
\begin{gathered}
\left(\frac{1+\imath}{\sqrt{2}}I-S(\pi)\right)^{-1}y_0=\sum_{k\in\Z}\frac{1}{\frac{1+\imath}{\sqrt{2}}-e^{-\pi k^2\imath}}y_{0k}\sqrt{\frac{1}{2\pi}}e^{kx\imath}\\
\Rightarrow \left\|\left(\frac{1+\imath}{\sqrt{2}}I-S(\pi)\right)^{-1}\right\|=\sqrt{1+\frac{1}{\sqrt{2}}}\doteq1.30656.
\end{gathered}
$$
Hence $U=\pi\sqrt{1+\frac{1}{\sqrt{2}}}\doteq4.10469$ (see \eqref{U}). Now, the function $H(y)=\frac{|y|^2y}{5(|y|^2+1)}$, $H : \C\to\C$ has a derivative
$$
DH(y)v=\frac{(|y|^4+2|y|^2)v+y^2\bar v}{5(|y|^2+1)^2}
$$
for $\bar v$ denoting the complex conjugate of $v\in\C$. Hence
$$
|DH(y)v|\le\frac{|y|^4+3|y|^2}{5(|y|^2+1)^2}|v|\le\frac{9}{40}|v|.
$$
Hence $H$ has a Lipschitz constant $\frac{9}{40}$, so we have $L=\frac{9}{40}$ in (C2). Thus
$$
LU=\frac{9\pi}{40}\sqrt{1+\frac{1}{\sqrt{2}}}\doteq0.923555<1
$$
and \eqref{bf3} is verified. By Theorem \ref{th3}, \eqref{sch} has a unique $\left(\pi,\frac{1+\imath}{\sqrt{2}}\right)$-periodic mild solution for any $0\ne a\in\C$ satisfying
$$
\max_{t\in[0,2\pi]}\|y(\cdot,t)\|\le\frac{20\sqrt{38\left(2+\sqrt{2}\right)}\pi^{3/2}}{80-9\sqrt{2\left(2+\sqrt{2}\right)}\pi}|a|\doteq207.421|a|.
$$
\end{exmp}

Finally, we extend Theorem \ref{th2} as follows.

\begin{theorem}\label{th4}
Assume (A2), (A3), (C1), (C2), (C3) along with
\begin{itemize}
\item[(A5)] $S(t)$ is compact for any $t>0$.
\end{itemize}
If
\begin{equation}\label{eq1}
Q\|(cI-S(\om))^{-1}\|e^{\ga\om}(e^{Qg_2\om}-1)<1,
\end{equation}
then \eqref{e4} has a $(\om,c)$-periodic mild solution.
\end{theorem}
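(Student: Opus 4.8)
The plan is to recast the search for a $(\omega,c)$-periodic mild solution as a fixed point problem for an operator $\mathcal{S}\colon Z\to Z$ defined by $(\mathcal{S}y)(t)=\int_0^\omega G(t,s)g(s,y(s))\,ds$, whose fixed points are exactly the solutions of \eqref{mildb}; by Lemma \ref{lem3} each such fixed point extends to a $(\omega,c)$-periodic mild solution of \eqref{e4}. Since $\dim X=\infty$ is now allowed, a bare self-map argument as in Theorem \ref{th2} is no longer available, so I would instead apply the Leray--Schauder nonlinear alternative (equivalently Schaefer's fixed point theorem): it suffices to prove that $\mathcal{S}$ is completely continuous and that the set of all $y\in Z$ with $y=\lambda\mathcal{S}y$ for some $\lambda\in(0,1]$ is bounded by a constant independent of $\lambda$.

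Continuity of $\mathcal{S}$ follows from the continuity of $g$ together with (C3) and dominated convergence. The essential and hardest step is the compactness of $\mathcal{S}$ on bounded sets, which is where (A5) is used. I would verify relative compactness of $\mathcal{S}(B)$ in $Z=C([0,\omega],X)$ through the Arzel\`a--Ascoli theorem, checking pointwise relative compactness in $X$ and equicontinuity in $t$. For the pointwise part, for fixed $t$ and small $\epsilon>0$ I would split the Volterra-type integral $\int_0^{t}$ at $t-\epsilon$ and use the semigroup identity $S(t-s)=S(\epsilon)S(t-s-\epsilon)$, so that the truncated integral becomes the image of a bounded set under the compact operator $S(\epsilon)$ and is thus relatively compact, while the remaining tail is uniformly small by \eqref{gr1}; the integral $\int_t^\omega$ with kernel $S(\omega+t-s)$ is handled the same way. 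Equicontinuity in $t$ would follow from the strong continuity of $S(t)$ (uniform on compact sets after the same truncation) together with the uniform bound \eqref{gr1}. This semigroup compactness machinery is the main obstacle, since $S(0)=I$ need not be compact and care is needed near $s=t$.

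For the a priori bound, suppose $y=\lambda\mathcal{S}y$ with $\lambda\in(0,1]$; then $y$ is the mild solution of $\dot y=Ay+\lambda g(t,y)$ with $y(\omega)=cy(0)$, so $y(t)=S(t)y_0+\lambda\int_0^tS(t-s)g(s,y(s))\,ds$ with $y_0=\lambda(cI-S(\omega))^{-1}\int_0^\omega S(\omega-s)g(s,y(s))\,ds$. Writing $w(t)=\|y(t)\|$ and using \eqref{gr1} and (C3), the substitution $e^{-\gamma t}w(t)$ turns the integral inequality into a Gronwall inequality with constant $Qg_2$, yielding $w(s)\le Q\|y_0\|e^{(\gamma+Qg_2)s}+(\text{a }g_1\text{-term})$. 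Feeding this bound back into the expression for $y_0$ and integrating, the homogeneous part produces exactly the factor $Q\|(cI-S(\omega))^{-1}\|e^{\gamma\omega}(e^{Qg_2\omega}-1)$ multiplying $\|y_0\|$; hypothesis \eqref{eq1} makes this factor strictly less than $1$, so $\|y_0\|$, and hence $\|y\|_0$, is bounded by a constant depending only on $g_1$ and the data, independent of $\lambda$. With complete continuity and this uniform bound in hand, the nonlinear alternative yields a fixed point of $\mathcal{S}$, which by Lemma \ref{lem3} provides the desired $(\omega,c)$-periodic mild solution.
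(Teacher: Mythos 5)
Your proposal is correct, but it follows a genuinely different route from the paper. The paper works in the state space $X$ rather than in the path space $Z$: using (C2) and Remark \ref{rem1} it defines the Poincar\'e-type map $P(y_0)=(cI-S(\om))^{-1}\int_0^\om S(\om-s)g(s,y(y_0,s))\,ds$, whose fixed points are initial values of $(\om,c)$-periodic mild solutions, proves via the same Gronwall computation you use that \eqref{eq1} yields an invariant ball $B(\Xi)\subset X$, establishes compactness of $P$ by truncating the integral at $\om-n^{-1}$ and factoring out the compact operator $S(n^{-1})$ (so that $P$ is a uniform limit of compact maps), and then applies Schauder directly --- no Arzel\`a--Ascoli and no equicontinuity argument is needed, which is the main simplification this approach buys. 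Your route, by contrast, stays with the Green-operator $\mathcal{S}$ on $Z=C([0,\om],X)$ and invokes the Leray--Schauder/Schaefer alternative; your choice of that alternative (rather than Schauder on a ball in $Z$) is essential and well judged, since a direct self-map estimate for $\mathcal{S}$ would produce the condition $g_2U<1$ with $U$ from \eqref{U}, not the stated condition \eqref{eq1}, whereas your $\lambda$-parametrized a priori bound reproduces exactly the factor $Q\|(cI-S(\om))^{-1}\|e^{\ga\om}(e^{Qg_2\om}-1)$. The price you pay is a heavier compactness step: besides the pointwise precompactness (your truncation $S(t-s)=S(\epsilon)S(t-s-\epsilon)$ is the same trick the paper uses once), you must verify equicontinuity, which requires more than bare strong continuity --- either the standard fact that compact semigroups are norm-continuous on $(0,\infty)$, or your argument that after truncation the integrand values lie in a precompact set on which strong convergence is uniform; your sketch is brief here but the idea is sound. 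A small compensating advantage of your approach is that it never needs the well-posedness of the initial value problem, so (C2) enters only to make $\mathcal{S}$ continuous (where continuity of $g$ with (C3) would already suffice), while the paper's Poincar\'e map cannot even be defined without the uniqueness and continuous dependence guaranteed by (C2).
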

\begin{proof}
Recalling Remark \ref{rem1}, we denote by $y(y_0,t)$, $t\in\R_+$ the unique mild solution of \eqref{e4} and introduce a mapping
\begin{equation}\label{pon}
P(y_0)=(cI-S(\om))^{-1}\int_0^\om S(\om-s)g(s,y(y_0,s))ds.
\end{equation}
Note $y_0=P(y_0)$ is equivalent to $y(y_0,\om)=cy_0$, so fixed points of $P$ determine $(\om,c)$-periodic mild solutions of \eqref{e4}. Next \eqref{gr1}, \eqref{mild} and (C3) imply
$$
\begin{gathered}
\|y(y_0,t)\|\le Qe^{\ga t}\|y\|_0+Q\int_0^te^{\gamma(t-s)}\left(g_1+g_2\|y(y_0,s)\|\right)ds\\
=Qe^{\ga t}\|y\|_0+Qg_1e^{\gamma t}\int_0^te^{-\gamma s}ds+Qg_2e^{\gamma t}\int_0^t\|y(y_0,s)\|e^{-\ga s}ds\\
\le Qe^{\ga t}\|y\|_0+Qg_1e^{\gamma t}\omega e^{|\gamma|\omega}+Qg_2e^{\gamma t}\int_0^t\|y(y_0,s)\|e^{-\ga s}ds
\end{gathered}
$$
which gives
$$
\|y(y_0,t)\|e^{-\ga t}\le Q\|y_0\|+Qg_1\om e^{|\ga|\om}+Qg_2\int_0^t\|y(y_0,s)\|e^{-\ga s}ds
$$
for $t\in[0,\om]$, so Gronwall inequality gives
$$
\|y(y_0,t)\|\le Q\left(\|y_0\|+g_1\om e^{|\ga|\om}\right)e^{(Qg_2+\ga)t}\, \forall t\in[0,\om].
$$
Then \eqref{pon} has the estimate
$$
\begin{gathered}
\|P(y_0)\|\le \|(cI-S(\om))^{-1}\|\int_0^\om\|S(\om-s)\|(g_1+g_2\|y(y_0,s)\|)ds\\
\le Q\|(cI-S(\om))^{-1}\|\int_0^\om e^{\ga(\om-s)}\Big(g_1+g_2 Q\left(\|y_0\|+g_1\om e^{|\ga|\om}\right)e^{(Qg_2+\ga)s}\Big)ds\\
\le Q\|(cI-S(\om))^{-1}\|\left(g_1e^{|\ga|\om}\om+g_2e^{\ga\om}Q\left(\|y_0\|+g_1\om e^{|\ga|\om}\right)\int_0^\om e^{Qg_2s}ds\right)\\
= Q\|(cI-S(\om))^{-1}\|\left(g_1e^{|\ga|\om}\om+e^{\ga\om}\left(\|y_0\|+g_1\om e^{|\ga|\om}\right)(e^{Qg_2\om}-1)\right)\\
= Q\|(cI-S(\om))^{-1}\|e^{\ga\om}(e^{Qg_2\om}-1)\|y_0\|\\
+ Q\|(cI-S(\om))^{-1}\|\left(g_1e^{|\ga|\om}\om+e^{\ga\om}g_1\om e^{|\ga|\om}(e^{Qg_2\om}-1)\right).
\end{gathered}
$$
Hence by \eqref{eq1} and taking $y_0\in X$ such that
$$
\|y_0\|\le \Xi=\frac{Q\|(cI-S(\om))^{-1}\|\left(g_1e^{|\ga|\om}\om+e^{\ga\om}Qg_1\om e^{|\ga|\om}(e^{Qg_2\om}-1)\right)}{1- Q\|(cI-S(\om))^{-1}\|e^{\ga\om}(e^{Qg_2\om}-1)},
$$
we get $\|P(y_0)\|\le \Xi$, i.e. $P : B(\Xi)\to B(\Xi)$. We already know that $P$ is continuous. Now we shaw that $P$ is also compact. For any $n\in\N$, $n>\frac{1}{\om}$, we set
$$
P_n(y_0)=(cI-S(\om))^{-1}\int_0^{\om-n^{-1}}S(\om-s)g(s,y(y_0,s))ds.
$$
Then
$$
P_n(y_0)=S(n^{-1})(cI-S(\om))^{-1}\int_0^{\om-n^{-1}}S(\om-n^{-1}-s)g(s,y(y_0,s))ds.
$$
Since
$$
\begin{gathered}
\left\|(cI-S(\om))^{-1}\int_0^{\om-n^{-1}}S(\om-n^{-1}-s)g(s,y(y_0,s))ds\right\|\le Q\|(cI-S(\om))^{-1}\|\\
\int_0^{\om-n^{-1}} e^{\ga(\om-n^{-1}-s)}\Big(g_1+g_2 Q\left(\Xi+g_1\om e^{|\ga|\om}\right)e^{(Qg_2+\ga)s}\Big)ds\\
\le Q\|(cI-S(\om))^{-1}\|
\om e^{|\ga|\om}\Big(g_1+g_2 Q\left(\Xi+g_1\om e^{|\ga|\om}\right)e^{(Qg_2+|\ga|)\om}\Big)
\end{gathered}
$$
for any $y_0\in B(\Xi)$, by (A5), $P_n(B(\Xi))$ is precompact. Furthermore, we derive
$$
\begin{gathered}
\|P(y_0)-P_n(y_0)\|\le Q\|(cI-S(\om))^{-1}\|\\
\int_{\om-n^{-1}}^\om e^{\ga(\om-s)}\Big(g_1+g_2 Q\left(\Xi+g_1\om e^{|\ga|\om}\right)e^{(Qg_2+\ga)s}\Big)ds\\
\le Q\|(cI-S(\om))^{-1}\|
e^{|\ga|\om}\Big(g_1+g_2 Q\left(\Xi+g_1\om e^{|\ga|\om}\right)e^{(Qg_2+|\ga|)\om}\Big)n^{-1},
\end{gathered}
$$
hence $P_n\rightrightarrows P$ uniformly on $B(\Xi)$. This gives precompactness of $P(\Xi)$. Summarizing, we can apply the Schauder fixed point theorem to $P$, which finishes the proof.
\end{proof}

For illustration of Theorem \ref{th4}, we consider a parametrized version of Example \ref{exam1} in the form

\begin{exmp}
\rm We consider a nonlinear heat equation with a forcing
\begin{equation}\label{sg2}
\begin{gathered}
y_t-y_{xx}+\frac{\eta y^3}{y^2+1}=a\sin t,\quad 0\ne a\in\R\\
y(0,t)=y(\pi,t)=0,\quad t\ge0,\, x\in[0,\pi]
\end{gathered}
\end{equation}
for a parameter $\eta>0$.
Then we have $L=\frac{9\eta}{8}$ in (C2). Thus
$$
LU=\frac{9(1-e^{-\pi })}{8}\eta
$$
and \eqref{bf3} is verified for
\begin{equation}\label{est4}
0<\eta<\frac{8}{9 \left(1-e^{-\pi }\right)}\doteq0.929036.
\end{equation}
By Theorem \ref{th3}, \eqref{sg2} has a unique $\pi$-antiperiodic mild solution for any $0\ne a\in\R$ and $\eta$ satisfying \eqref{est4}.

On the other hand, for any $y\in X=L^2(0,\pi)$, we have
$$
\begin{gathered}
\left\|a\sin t-\frac{\eta y^3}{y^2+1}\right\|\le |a|\|\sin t\|+\eta\sqrt{\int_0^\pi\frac{y^6(t)}{(y^2(t)+1)^2}dt}\\
\le |a|\sqrt{\frac{\pi}{2}}+\eta\sqrt{\int_0^\pi y^2(t)dt}=\sqrt{\frac{\pi}{2}}+\eta\|y\|.
\end{gathered}
$$
Thus (C3) is verified for $g_1=|a|\sqrt{\frac{\pi}{2}}$ and $g_2=\eta$. Then \eqref{eq1} has the form
$$
e^{-\pi}(e^{\eta\pi}-1)<1,
$$
i.e.,
\begin{equation}\label{est4b}
0<\eta<\frac{\ln(1+e^\pi)}{\pi}\doteq1.01347.
\end{equation}
Next, since $e^{-k^2t}\to0$ as $k\to\infty$ for any $t>0$, by \eqref{st}, the compactness of $S(t)$, $t>0$ is clear and well-known. By Theorem \ref{th4}, \eqref{sg2} has a $\pi$-antiperiodic mild solution for any $0\ne a\in\R$ and $\eta$ satisfying \eqref{est4b} and it is unique when \eqref{est4} holds.
\end{exmp}

\end{document}